\newtheorem{thm}{Theorem}[section]
\newtheorem{lem}[thm]{Lemma}
\newtheorem{defi}[thm]{Definition}
\newtheorem{remark}[thm]{Remark}
\numberwithin{equation}{section}
\journal{}
\begin{document}
\begin{spacing}{1.15}
\begin{frontmatter}
\title{\textbf{The subgraph eigenvector centrality of graphs}}

\author{Qingying Zhang}
\author{Lizhu Sun}\ead{lizhusun@hrbeu.edu.cn}
\author{Changjiang Bu}

\address{College of Mathematical Sciences, Harbin Engineering University, Harbin, PR China}

\begin{abstract}
Let $G$ be a connected graph and let $F$ be a connected subgraph of $G$ with a given structure.
We consider that the centrality of a vertex $i$ of $G$ is determined by the centrality of other vertices in all subgraphs contain $i$ and isomorphic to $F$.
In this paper we propose an $F$-subgraph tensor and an $F$-subgraph eigenvector centrality of $G$.
When the graph is $F$-connected, we show that the $F$-subgraph tensor is weakly irreducible, and in this case, the $F$-subgraph eigenvector centrality exists.
Specifically, when we choose $F$ to be a path $P_1$ of length $1$(or a complete graph $K_2$), the $F$-eigenvector centrality is eigenvector centrality of $G$.
Furthermore, we propose the $(K_2,F)$-subgraph eigenvector centrality of $G$ and prove it always exists when $G$ is connected.
Specifically, the $P_2$-subgraph eigenvector centrality and $(K_2,F)$-subgraph eigenvector centrality are studied.
Some examples show that the ranking of vertices under them differs from the rankings under several classic centralities.
Vertices of a regular graph have the same eigenvector centrality scores.
But the $(K_2,K_3)$-subgraph eigenvector centrality can distinguish vertices in a given regular graph.
\end{abstract}

\begin{keyword}
Subgraph tensor, Centrality, Eigenvalue \\
\emph{AMS classification(2020): \emph{05C50}, \emph{05C82}, \emph{15A69}}
\end{keyword}

\end{frontmatter}

\section{Introduction}
Centrality is a measure of the importance of vertices in complex networks according to a certain viewpoint.
Over the years, researchers have proposed many centrality measures \cite{albert2000error,freeman1991centrality,alahakoon2011k,estrada2005subgraph,zhou2023estrada,bugedo2024family}.
Eigenvector centrality is an important centrality measure \cite{bonacich1972factoring}.
It is widely applied in social networks \cite{bihari2015eigenvector},
brain science networks \cite{lohmann2010eigenvector},
protein networks \cite{negre2018eigenvector},
hypernetworks \cite{benson2019three,tudisco2018node},
multilayer networks \cite{wang2018new},
 Google's PageRank search engine \cite{langville2005survey},
 and so on.

Let $A = (a_{ij})_{n \times n}$ be  the adjacency matrix of graph $G$ with $n$ vertices,
where
\begin{align*}
a_{ij}=
\begin{cases}
1,&  if \text{ $ i \sim j $,} \\
0,&   \text{ otherwise}.
\end{cases}
\end{align*}

\begin{defi} \cite{bonacich1972factoring}  \label{ec}
For a connected graph $G$ with $n$ vertices,
let $A$ be the adjacency matrix of $G$, and $\rho(A)$ be the spectral radius of $G$.
The positive solution $\boldsymbol {x}=(x_1,x_2,{\cdots},x_n)^{\mathrm{T}}$ to the equation $A \boldsymbol {x} = \rho(A) \boldsymbol {x}$ is the eigenvector centrality of $G$.
\end{defi}

By the Perron-Frobenius theorem of nonnegative matrices \cite{meyer2023matrix}, the positive eigenvector $\boldsymbol {x}$ corresponding to the spectral radius of a connected graph $G$ is unique (up to its scalar multiples).
Let $[n]=\{1,2,\cdots,n\}$.
For vertex $i \in [n]$,
from Definition \ref{ec}, we know that
\begin{align}\label{90999}
x_i = \frac{1}{ \rho(A)} \sum_{j=1}^n a_{ij} x_j = \frac{1}{ \rho(A)}  \sum_{  \{i,j\} \in {E(G)}} x_j.
\end{align}
That is the centrality of $i$ is determined by the centrality of other vertices that are in edges contain $i$.
An edge of $G$ can be regarded as a subgraph of $G$, that is a path $P_1$ of length $1$(or a complete graph $K_2$ ).
So the eigenvector centrality of $i$ is determined by the eigenvector centrality of other vertices in subgraphs $P_1$(or $K_2$) that are contain $i$.
Inspired by the eigenvector centrality of graphs, we propose an $F$-subgraph eigenvector centrality of graphs.
That is the centrality of a vertex $i$ of $G$ to be determined by the centrality of other vertices in all subgraphs contain $i$ and isomorphic to $F$.
We propose an $F$-subgraph tensor of graphs.


\begin{defi} \label{ztzldy}
For a connected graph $G$ with $n$ vertices,
let $F$ be a connected subgraph of $G$ with $k$ vertices.
We use $V_F$ to denote a set consisting of vertex sets of subgraphs isomorphic to $F$.
Let $\mathcal{F}(v_1,  \cdots , v_k)$ be a set of subgraphs isomorphic to $F$ with the vertices set  $\{v_1,  \cdots , v_k\}$.
Let
\begin{align*}
a_{i_1 i_2 \cdots  i_k} =
\begin{cases}
|\mathcal{F}(i_1, i_2, \cdots , i_k)| ,&  if \text{ $  \{ i_1, i_2, \cdots , i_k\} \in V_F $,} \\
0,&    \text{ otherwise},
\end{cases}
\end{align*}
where $|\mathcal{F}(i_1,  \cdots , i_k)|$ denotes the number of elements in the set $\mathcal{F}(i_1,  \cdots , i_k)$.
The $k$-th order $n$-dimensional tensor $\mathcal{A}_F = (a_{i_1i_2 \cdots i_k})$ is called the $F$-subgraph tensor of $G$.
\end{defi}

Let $\rho(\mathcal{A}_F)$ be the spectral radius of $\mathcal{A}_F $, and let
\begin{align*}
 x_{i}^{k-1} = \frac{1}{\rho(\mathcal{A}_F)}
               \sum_{\{ i,i_2,\cdots,i_k  \} \in V_F  }|\mathcal{F}(i,i_2, \cdots, i_k)|   x_{i_2} \cdots x_{i_k},
\end{align*}
where $x_i$ is the centrality score of $i$, $i \in [n]$.
Then
\begin{align}\label{1}
\mathcal{A}_F \boldsymbol {x} ^{k-1}= \rho(\mathcal{A}_F) \boldsymbol {x} ^{[k-1]},
\end{align}
where $\boldsymbol {x}^{[k-1]}=(x_1^{k-1},x_2^{k-1},{\cdots},x_n^{k-1})^{\mathrm{T}}$.

When the subgraph tensor $\mathcal{A}_F$ is weakly irreducible, from Lemma \ref{perron} we can know that the equation (\ref{1}) has a unique positive solution $\boldsymbol {x}$ (up to its scalar multiples).
Specifically, when $F$ is chosen to be $P_1$(or $K_2$),
$\mathcal{A}_F =(a_{ij})$ is the adjacency matrix of $G$, $\boldsymbol {x}$ is the eigenvector centrality of graphs.

\begin{defi} \label{zitudingyi}
Let $G$ be a connected graph.
For a connected subgraph $F$ of $G$ with $k$ vertices,
$\mathcal{A}_F$ is the subgraph tensor.
If $\mathcal{A}_F$ is weakly irreducible,
the positive solution $\boldsymbol {x}$ of $\mathcal{A}_F \boldsymbol {x} ^{k-1}= \rho(\mathcal{A}_F) \boldsymbol {x} ^{[k-1]}$  is called the $F$-subgraph eigenvector centrality of $G$.
\end{defi}

There has been some research on  special subgraph tensors for studying graph structure and centrality.
The  $2$-star subgraph tensor and it's eigenvalue problems have  already  been proposed in \cite{petersdorf1969spektrum,cvetkovic1980spectra}.
The literature \cite{liu2023high} and \cite{liu2023generalization} respectively proposed the $r$-star tensor and the $r$-clique tensor of graphs and used their spectra  to study the extremal problems of graphs.
The literature \cite{xu2023two}  proposed the  2-step tensor and the 2-step eigenvector centrality of graphs.
In this paper, the general subgraph tensor and the general subgraph eigenvector centrality of graphs are proposed (that is the Definition \ref{ztzldy}, \ref{zitudingyi}).
Thus the definitions of subgraph tensor  in \cite{cvetkovic1980spectra,liu2023high,liu2023generalization,xu2023two} are  special cases of Definition \ref{ztzldy}.

In this paper we consider that the centrality of a vertex $i$ of $G$ is determined by the centrality of other vertices in all subgraphs  contain $i$ and isomorphic to $F$.
We propose an $F$-subgraph tensor and an $F$-subgraph eigenvector centrality of $G$.
Furthermore, we propose the $(K_2,F)$-eigenvector centrality of $G$.
We give a necessary and sufficient condition for the existence of $F$-subgraph eigenvector centrality and prove the $(K_2,F)$-subgraph eigenvector centrality always exists.
The organization of this paper is as follows.
In Section 2, we introduce some concepts and lemmas related to tensors.
In Section 3, we study the $P_2$-subgraph eigenvector centrality and the $(K_2,F)$-subgraph eigenvector centrality,
and give the  existence of the two centrality measures.
The degree centrality and eigenvector centrality of the vertices are always the same in regular graphs.
We provide an example showcasing that the ($K_2,K_3$)-subgraph eigenvector centrality can distinguish vertices in a regular graph.
In Section 4, the centrality measures proposed in this paper are applied to some real-world networks, and the results show that the vertices with high centrality scores are generally in more given subgraphs.
However, the centrality score of a vertex is not entirely determined by the number of given subgraphs contain the vertex, the centrality measures proposed in this paper are  global centrality measures.

\section{Preliminaries }
Let $\mathbb{C}^n$ and $\mathbb{C}^{[k,n]}$ denote the sets of $n$-dimensional vectors and $k$-th order $n$-dimensional tensors over the complex number field $\mathbb{C}$, respectively.
A tensor $\mathcal{A}=(a_{{i_1} {i_2} {\cdots}  {i_k}}) \in \mathbb{C}^{[k,n]} $ is a multi-dimensional array containing $n^k$ elements, where $i_j \in [n], j \in [k]$.
For $\boldsymbol {x}=(x_1,x_2,{\cdots},x_n)^{\mathrm{T}} \in \mathbb{C}^n$,
$\mathcal{A}{\boldsymbol {x}}^{k-1}$ is a vector in $\mathbb{C}^n$,
with the $i$-th component
\begin{align*}
(\mathcal{A}{\boldsymbol {x}}^{k-1})_i=\sum_{{i_2}, {\cdots},  {i_k}=1}^{n}  a_{{i} {i_2} {\cdots} {i_k}} {x_{i_2}}{x_{i_3}}{\cdots}{x_{i_k}}, i \in [n].
\end{align*}
If there exists  $\lambda \in \mathbb{C}$ and a nonzero vector $\boldsymbol {x}$ such that
\begin{align*}
\mathcal{A}{\boldsymbol {x}}^{k-1}=\lambda \boldsymbol {x}^{[k-1]},
\end{align*}
then $\lambda$  is called an eigenvalue of $\mathcal{A}$,
and $\boldsymbol {x}$ is an eigenvector of  $\mathcal{A}$ corresponding to $\lambda$ \cite{qi2005eigenvalues,lim2005singular},
where $\boldsymbol {x}^{[k-1]}=(x_1^{k-1}, {\cdots}, x_n^{k-1})^{\mathrm{T}}$.
The eigenvalues of tensors have attracted extensive attention since they have been proposed \cite{qi2017tensor,cooper2012spectra,sun2016moore,chen2024spectra}.
And they have  been applied to hypergraph clustering \cite{chang2020hypergraph}, image fusion \cite{sun2023nf},
crystallography \cite{chen2023c}, mechanics \cite{nikabadze2016eigenvalue}, and so on.

A tensor is nonnegative if  all its elements are nonnegative.
For a  nonnegative tensor $\mathcal{A}=(a_{{i_1} {i_2} {\cdots}  {i_k}})$, ${i_j} \in [n]$, $j \in [k]$,
let $D_{\mathcal{A}}=(V{(D_{\mathcal{A}})},E{(D_{\mathcal{A}})})$ be the associated directed graph of $\mathcal{A}$,
with the vertex set $V{(D_{\mathcal{A}})}=[n]$, and the arc set
$E{(D_{\mathcal{A}})}=\left\{ {(i,j)| a_{i {i_2}{\cdots}{i_k} } \neq 0 ,  j \in {\left\{{{i_2},{\cdots},{i_k}}\right\}}  }\right\}$ (\cite{qi2017tensor}).
For any distinct $i,j \in {V(D_{\mathcal A})}$,
if there exists a directed path from  $i$ to $j$ and $j$ to $i$,
then $D_{\mathcal{A}}$  is said to be strongly connected.

The relationship between the weak irreducibility of nonnegative tensors and their associated directed graphs, as well as some results of the Perron-Frobenius theorem for nonnegative tensors have been given (see \cite{lim2005singular,chang2008perron,yang2010further,friedland2013perron}).

\begin{lem} \cite{friedland2013perron} \label{perron}
The nonnegative tensor $\mathcal{A}$ is weakly irreducible if and only if $D_{\mathcal{A}}$ is strongly connected.
If $\mathcal{A}$ is a nonnegative weakly irreducible tensor, then the spectral radius $\rho (\mathcal{A})$ is an eigenvalue of $\mathcal{A}$, and there exists a unique positive eigenvector corresponding to $\rho (\mathcal{A})$ (up to its scalar multiples).
\end{lem}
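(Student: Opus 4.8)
The plan is to split the statement into the combinatorial equivalence (weak irreducibility of $\mathcal A$ versus strong connectivity of $D_{\mathcal A}$) and the Perron--Frobenius conclusion, and to treat the latter along the classical nonlinear Perron--Frobenius route. For the first part I would recall that, by definition, $\mathcal A$ is weakly irreducible exactly when the nonnegative $n\times n$ matrix $M(\mathcal A)=(m_{ij})$ with $m_{ij}=\sum_{\{i_2,\dots,i_k\}\ni j}a_{i i_2\cdots i_k}$ is an irreducible matrix, and then observe that $D_{\mathcal A}$ is precisely the digraph of $M(\mathcal A)$: there is an arc $i\to j$ iff $m_{ij}>0$ iff some positive entry $a_{i i_2\cdots i_k}$ has $j\in\{i_2,\dots,i_k\}$, which is exactly the definition of $D_{\mathcal A}$ used in the text. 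The classical fact that a nonnegative matrix is irreducible iff its associated digraph is strongly connected then closes this part.

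For the Perron--Frobenius part, introduce the Collatz--Wielandt quantity $r(\boldsymbol x)=\min_{i:\,x_i>0}(\mathcal A\boldsymbol x^{k-1})_i/x_i^{k-1}$ for $\boldsymbol x\ge\boldsymbol 0$, $\boldsymbol x\ne\boldsymbol 0$; standard estimates show $\sup_{\boldsymbol x}r(\boldsymbol x)$ is finite. To produce a nonnegative eigenpair I perturb: set $\mathcal A_t=\mathcal A+t\mathcal E$ with $\mathcal E$ the all-ones tensor, so $\mathcal A_t$ is strictly positive for $t>0$; the map $B_t(\boldsymbol x)_i=(\mathcal A_t\boldsymbol x^{k-1})_i^{1/(k-1)}\big/\sum_j(\mathcal A_t\boldsymbol x^{k-1})_j^{1/(k-1)}$ is a continuous self-map of the standard simplex, so Brouwer's fixed point theorem gives a strictly positive eigenvector $\boldsymbol x^{(t)}$ of $\mathcal A_t$ with eigenvalue $\lambda_t=\rho(\mathcal A_t)>0$. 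The $\lambda_t$ are bounded; letting $t\downarrow 0$ along a subsequence and using compactness of the simplex yields $\boldsymbol x^{(t)}\to\boldsymbol x_0\ge\boldsymbol 0$, $\boldsymbol x_0\ne\boldsymbol 0$, $\lambda_t\to\lambda_0$, and by continuity $\mathcal A\boldsymbol x_0^{k-1}=\lambda_0\boldsymbol x_0^{[k-1]}$ with $\lambda_0=\lim_t\rho(\mathcal A_t)=\rho(\mathcal A)$ (continuity and monotonicity of the spectral radius). Since a weakly irreducible $\mathcal A$ with $n\ge 2$ has no zero row, $\rho(\mathcal A)>0$; moreover $\lambda_0=r(\boldsymbol x_0)$, so together with the elementary bound $r(\boldsymbol x)\le\rho(\mathcal A)$ this gives the identity $\rho(\mathcal A)=\sup_{\boldsymbol x}r(\boldsymbol x)$.

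The main obstacle is upgrading this $\boldsymbol x_0$ to a \emph{strictly positive} vector when $\mathcal A$ is weakly irreducible. Arguing by contradiction from $S:=\mathrm{supp}(\boldsymbol x_0)$ with $\emptyset\ne S\subsetneq[n]$: the vanishing of $(\mathcal A\boldsymbol x_0^{k-1})_i$ for $i\notin S$ forces every positive entry in row $i$ to carry an index in $S^c$, while restricting to $S$ shows $\boldsymbol x_0|_S$ is a strictly positive eigenvector of the principal subtensor $\mathcal A[S]$ with eigenvalue $\rho(\mathcal A)$, so $\rho(\mathcal A[S])=\rho(\mathcal A)$. Strong connectivity of $D_{\mathcal A}$ provides an arc out of $S$, and I would then add a small strictly positive perturbation on the $S^c$-coordinates and apply the normalized operator $T(\boldsymbol x)=(\mathcal A\boldsymbol x^{k-1})^{[1/(k-1)]}$ a bounded number of times --- using strong connectivity to spread the gained mass to every coordinate --- to build a strictly positive $\boldsymbol z$ with $r(\boldsymbol z)>\rho(\mathcal A)$, contradicting $\rho(\mathcal A)=\sup_{\boldsymbol x}r(\boldsymbol x)$. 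This perturbation-and-iteration bookkeeping, following \cite{friedland2013perron}, is the delicate step; everything else is routine.

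Uniqueness I would then obtain by a direct propagation argument. If $\boldsymbol x,\boldsymbol y>\boldsymbol 0$ both solve $\mathcal A\boldsymbol z^{k-1}=\rho(\mathcal A)\boldsymbol z^{[k-1]}$, rescale $\boldsymbol y$ so that $\boldsymbol x\ge\boldsymbol y$ with equality in some coordinate $i_0$. Comparing the $i_0$-th equations gives $\sum a_{i_0 i_2\cdots i_k}\prod_j x_{i_j}=\sum a_{i_0 i_2\cdots i_k}\prod_j y_{i_j}$ with each left summand at least the corresponding right one, forcing equality termwise; since $\boldsymbol x,\boldsymbol y>\boldsymbol 0$ this yields $x_j=y_j$ for every out-neighbour $j$ of $i_0$ in $D_{\mathcal A}$. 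Repeating from each newly matched coordinate and invoking strong connectivity of $D_{\mathcal A}$ propagates the equality to all of $[n]$, so $\boldsymbol x=\boldsymbol y$; before rescaling, $\boldsymbol x$ and $\boldsymbol y$ are proportional, which is the claimed uniqueness up to scalar multiples.
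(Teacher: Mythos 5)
A preliminary remark: the paper does not prove Lemma \ref{perron} at all --- it is quoted with a citation to \cite{friedland2013perron} --- so there is no internal proof to compare against; your proposal has to be judged against the cited literature. Two of your three components are fine: the equivalence of weak irreducibility with strong connectivity of $D_{\mathcal{A}}$ (via the majorization matrix $M(\mathcal{A})$, whose digraph is exactly $D_{\mathcal{A}}$) is essentially definitional, and your uniqueness argument (rescale so $\boldsymbol{x}\ge\boldsymbol{y}$ with a tight coordinate, force termwise equality in that row, conclude $x_j=y_j$ for all out-neighbours, propagate by strong connectivity) is a correct and standard argument.

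The genuine gap is the positivity step, which is precisely the mathematical core of the theorem. The perturbation--Brouwer--limit argument only delivers a nonnegative eigenvector $\boldsymbol{x}_0$ for $\rho(\mathcal{A})$, and upgrading it to a strictly positive one under \emph{weak} irreducibility is hard for exactly the reason you half-acknowledge: for $i\notin S$, vanishing of $(\mathcal{A}\boldsymbol{x}_0^{k-1})_i$ only says that each positive entry of row $i$ has \emph{some} index outside $S$, which does not clash with the definition of weak irreducibility, so no quick contradiction is available. Your proposed repair --- add $\epsilon$ on the $S^c$-coordinates and apply the normalized map $T(\boldsymbol{x})=(\mathcal{A}\boldsymbol{x}^{k-1})^{[1/(k-1)]}$ a bounded number of times to manufacture $\boldsymbol{z}>0$ with $r(\boldsymbol{z})>\rho(\mathcal{A})$ --- is not supported by anything you establish: iterating $T$ \emph{preserves} a Collatz--Wielandt lower bound (if $\mathcal{A}\boldsymbol{z}^{k-1}\ge\lambda\boldsymbol{z}^{[k-1]}$, the same holds for $T(\boldsymbol{z})$) but gives no strict increase, and the perturbed vector typically starts with $r$ \emph{below} $\rho(\mathcal{A})$ (a coordinate $i\in S^c$ whose positive entries have all their indices in $S^c$ contributes a ratio of fixed size, independent of the strict surplus you gain at the one $S$-coordinate with an arc into $S^c$). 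Nothing in the sketch explains how the surplus at a few coordinates ever turns into a minimum over \emph{all} coordinates strictly above $\rho(\mathcal{A})$, and deferring the bookkeeping to \cite{friedland2013perron} does not close this, because that paper does not argue this way: its proof runs through nonlinear Perron--Frobenius theory for monotone, degree-one homogeneous self-maps of the open cone, where strong connectivity of the associated graph yields an eigenvector in the interior directly, rather than repairing a degenerate limit vector. Note also that the natural alternative patch --- showing a proper principal subtensor cannot carry a positive eigenvector with eigenvalue $\rho(\mathcal{A})$ --- is delicate here, since the easy comparison argument for it uses the full positive Perron vector of $\mathcal{A}$, i.e.\ the very object you are trying to construct. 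So either follow the monotone-homogeneous-map route of the cited source, or give an actual argument for full support of the limit vector; as written, the central step is asserted rather than proved.
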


From Lemma \ref{perron}, when the subgraph tensor $\mathcal{A}_F$ is weakly irreducible, the equation (\ref{1}) has a positive solution $\boldsymbol {x}$. That implies the $F$-subgraph eigenvector centrality of graphs exists.
Next, some definitions and a necessary and sufficient condition for the weak irreducibility of $F$-subgraph tensor are given.

\begin{defi} \label{ztltddy}
For a connected graph $G$,
let $F$ be a connected subgraph of $G$.
Let $P$ be a path in $G$.
If every edge on $P$ is in some subgraph of $G$ that is isomorphic to $F$, then $P$ is called an $F$-path of $G$.
If there exists an $F$-path between any two distinct vertices in $G$, then $G$ is said to be $F$-connected.
\end{defi}

\begin{thm} \label{liantong}
Let $G$ be a graph.
For a connected subgraph $F$ of $G$ with $k$ vertices (where $k \geq 2$), the subgraph tensor $\mathcal{A}_F$  is weakly irreducible if and only if $G$ is $F$-connected.
\end{thm}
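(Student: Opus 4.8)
The plan is to use Lemma~\ref{perron}, which reduces the weak irreducibility of the nonnegative tensor $\mathcal{A}_F$ to the strong connectedness of its associated digraph $D_{\mathcal{A}_F}$. So the whole proof becomes a translation between two combinatorial notions: arcs of $D_{\mathcal{A}_F}$ on one side, and edges lying in copies of $F$ on the other. First I would record the basic dictionary: by Definition~\ref{ztzldy}, $a_{i_1 i_2 \cdots i_k} \neq 0$ exactly when $\{i_1, \dots, i_k\}$ is the vertex set of some subgraph of $G$ isomorphic to $F$. Combined with the definition of $E(D_{\mathcal{A}_F})$ from Section~2, this says: $(i,j)$ is an arc of $D_{\mathcal{A}_F}$ if and only if there exist vertices $i_2, \dots, i_k$ with $j \in \{i_2,\dots,i_k\}$ such that $\{i, i_2, \dots, i_k\}$ spans a copy of $F$; equivalently, $i$ and $j$ both belong to the vertex set of a common copy of $F$ in $G$. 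Note this relation is symmetric in $i$ and $j$, which will make the strong-connectedness argument symmetric too.

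Next, for the ($\Leftarrow$) direction, assume $G$ is $F$-connected and fix distinct $i,j \in [n]$. By Definition~\ref{ztltddy} there is an $F$-path $P = v_0 v_1 \cdots v_m$ from $i=v_0$ to $j=v_m$, every edge $\{v_{t}, v_{t+1}\}$ of which lies in some copy $F_t$ of $F$. Since $v_t$ and $v_{t+1}$ are both vertices of $F_t$, the dictionary above gives arcs $(v_t, v_{t+1})$ and $(v_{t+1}, v_t)$ in $D_{\mathcal{A}_F}$. Concatenating along $P$ yields a directed path from $i$ to $j$, and concatenating in the reverse order yields one from $j$ to $i$. Hence $D_{\mathcal{A}_F}$ is strongly connected and $\mathcal{A}_F$ is weakly irreducible.

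For the ($\Rightarrow$) direction, assume $\mathcal{A}_F$ is weakly irreducible, so $D_{\mathcal{A}_F}$ is strongly connected; take distinct $i,j$ and a directed path $i = u_0 \to u_1 \to \cdots \to u_\ell = j$ in $D_{\mathcal{A}_F}$. For each consecutive pair, the dictionary produces a copy $H_s$ of $F$ whose vertex set contains both $u_s$ and $u_{s+1}$. The one genuine wrinkle is that $u_s$ and $u_{s+1}$ need not be adjacent in $G$ — they are merely co-members of the connected subgraph $H_s$ — so I cannot immediately read off an $F$-path. To fix this I would replace the pair $(u_s, u_{s+1})$ by a path inside $H_s$ joining $u_s$ to $u_{s+1}$: since $H_s \cong F$ is connected, such a path exists, and every edge of it lies in $H_s$, a copy of $F$. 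Stitching these internal paths together over $s = 0, \dots, \ell-1$ gives a walk from $i$ to $j$ all of whose edges lie in copies of $F$; extracting a simple path from this walk (edges of a sub-path of an $F$-walk still lie in copies of $F$) produces an $F$-path from $i$ to $j$. As $i,j$ were arbitrary, $G$ is $F$-connected.

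The main obstacle, and the only step requiring care, is exactly this last point: an arc of $D_{\mathcal{A}_F}$ records co-membership in a copy of $F$ rather than an edge of $G$, so the passage from a directed path in $D_{\mathcal{A}_F}$ to an honest $F$-path in $G$ needs the connectedness of $F$ to route through the interior of each copy. I would also remark at the outset that $F$-connectedness forces $G$ to be connected (every vertex lies on some $F$-path), consistent with the standing hypothesis, and that the case $k=2$ recovers the classical fact that the adjacency matrix is irreducible iff $G$ is connected, since then a copy of $F$ is just an edge and the two notions coincide verbatim.
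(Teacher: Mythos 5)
Your proposal is correct, and it follows the paper's overall strategy: invoke Lemma~\ref{perron} to reduce weak irreducibility of $\mathcal{A}_F$ to strong connectedness of $D_{\mathcal{A}_F}$, and then translate arcs of $D_{\mathcal{A}_F}$ into copies of $F$ via Definition~\ref{ztzldy}. Your forward direction ($F$-connected $\Rightarrow$ weakly irreducible) is essentially verbatim the paper's argument. The difference is in the converse. The paper takes a directed path $(i_1,i_2)(i_2,i_3)\cdots(i_s,i_{s+1})$ in $D_{\mathcal{A}_F}$, extracts for each arc a copy $F_t$ of $F$ whose vertex set contains $i_t$ and $i_{t+1}$, and then immediately declares $i_1 e_1 i_2 \cdots i_s e_s i_{s+1}$ to be an $F$-path --- implicitly treating consecutive vertices of the directed path as adjacent in $G$, which the arc does not guarantee: as you correctly note, an arc of $D_{\mathcal{A}_F}$ only records co-membership of $i_t$ and $i_{t+1}$ in the vertex set of some copy of $F$. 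Your repair --- replacing each pair $(u_s,u_{s+1})$ by a path inside the connected copy $H_s\cong F$, concatenating these into a walk all of whose edges lie in copies of $F$, and then extracting a simple path (whose edges, being a subset of the walk's edges, are still each in a copy of $F$) --- closes exactly the step the paper glosses over, so your write-up of the converse is more careful than the published one while reaching the same conclusion. Your closing remarks (that $F$-connectedness forces connectedness of $G$, and that $k=2$ recovers irreducibility of the adjacency matrix for connected graphs) are consistent with the paper's discussion and harmless additions.
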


\begin{proof}
Let $D_{\mathcal{A}_F}$ be the directed graph associated with $\mathcal{A}_F$.

When $G$ is $F$-connected. By Definition \ref{ztltddy}, for $i, j \in V(G)$, $i \neq j$,
there exists an $F$-path $P_s = i_1 e_1 i_2 e_2 \cdots i_s e_s i_{s+1}$ in $G$,
where $i=i_1, j=i_{s+1}$.
For each edge $e_t = \{i_t, i_{t+1}\}$ on $ P_s $, there exists a subgraph isomorphic to $F$ with vertex set
$ \{ i_t, i_{t+1}, u_3^t, \cdots, u_k^t  \}$,  hence $a_{i_t i_{t+1}  u_3^t  \cdots u_k^t } \neq 0$,
where $t \in [s]$.
Therefore, there exists a directed arc from $i_t$ to $i_{t+1}$ in $D_{\mathcal{A}_F}$, which means that there is a directed path from $i$ to $j$ in $D_{\mathcal{A}_F}$.
So $D_{\mathcal{A}_F}$ is  strongly connected.
By Lemma \ref{perron}, we can know that $\mathcal{A}_F$  is weakly irreducible.

When $\mathcal{A}_F$ is weakly irreducible. For $i, j \in V(G)$, $i \neq j$,  there exists a directed path $ \widetilde{P}_s =  (i_1,i_2)(i_2,i_3) \cdots (i_s,i_{s+1})$ in $D_{\mathcal{A}_F}$,  where $i_1=i, i_{s+1}=j$.
From the definition of $D_{\mathcal{A}_F}$ we have $a_{i_t  i_{t+1}  u_3^t  \cdots  u_k^t } \neq 0$ in $\mathcal{A}_F$, where $t \in [s]$, $ u_3^t,  \cdots , u_k^t \in V(G)$.
That is, there exists a subgraph $F_t$ isomorphic to $F$ in $G$, and the vertex set of $F_t$ is $ \{ i_t, i_{t+1}, u_3^t, \cdots, u_k^t  \}$.
Therefore $i_1 e_1 i_2 e_2 \cdots i_s e_s i_{s+1}$ is an $F$-path.
So $G$ is $F$-connected.
\end{proof}


\section{Subgraph eigenvector centralities}
In this section, we chose $F$ to be a path $P_2$ of length $2$, and give a necessary and sufficient condition for the weak irreducibility of the $P_2$-subgraph tensor, and analyze the $P_2$-subgraph eigenvector centrality.
In numerical example, the vertices with high rankings under $P_2$-subgraph eigenvector centrality are also in more subgraph $P_2$.

Furthermore, we propose the ($K_2,F$)-subgraph eigenvector centrality and prove it always exists.
When we chose $F$ to be $K_3$, the ($K_2, K_3$)-subgraph eigenvector centrality can distinguish vertices in a regular graph.

\subsection{The $P_2$-eigenvector centrality of graphs}
In network analysis, path is a relatively important subgraph.
Some classic centrality measures are based on paths \cite{bonacich1972factoring,albert2000error,freeman1991centrality,alahakoon2011k}, and paths can be used to study the transmission of information in graphs and analyze the structure of graphs \cite{qi2017eb,de2022communication}.
In this section, we chose $F$ to be $P_2$.
For a connected graph $G$, we prove that the $P_2$-subgraph tensor of $G$ is weakly irreducible when $|E(G)| \geq 2$.

From Definition \ref{ztzldy} we have for a connected graph $G$ with $n$ vertices,
the $P_2$-subgraph tensor of $G$ is denoted by $\mathcal{A}_{P_2}=(a_{ijk})$,
where
\begin{align*}
a_{ijk}=
\begin{cases}
|\mathcal{P}_2 (i, j, k)| , &  if \text{ ${\{i, j, k \} \in V_{P_2}  } $,} \\
0,&   \text{ otherwise},
\end{cases}
\end{align*}
and $\mathcal{P}_2 (i, j, k)$ is a set of subgraphs  isomorphic to $P_2$ with the vertices set  $\{i,j,k\}$.


\begin{thm}
The $P_2$-subgraph tensor $\mathcal{A}_{P_2}$ of a connected graph $G$ is weakly irreducible if and only if $|E(G)| \geq 2$.
\end{thm}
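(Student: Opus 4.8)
The plan is to invoke Theorem \ref{liantong}, which reduces the weak irreducibility of $\mathcal{A}_{P_2}$ to the $P_2$-connectedness of $G$. So I would prove: a connected graph $G$ is $P_2$-connected if and only if $|E(G)| \geq 2$. The ``only if'' direction is immediate: if $|E(G)| \leq 1$ then either $G$ has no edges (so $n=1$ and there are no two distinct vertices, or $G$ is disconnected — excluded) or $G$ is a single edge $K_2$; in the latter case no edge of $G$ lies in a subgraph isomorphic to $P_2$ (which needs two edges and three vertices), so $G$ has no $F$-path at all and cannot be $P_2$-connected. Hence $P_2$-connectedness forces $|E(G)| \geq 2$.

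For the ``if'' direction, assume $G$ is connected with $|E(G)| \geq 2$. I first claim that every edge of $G$ lies in some subgraph isomorphic to $P_2$. Indeed, take any edge $e = \{u,v\}$. Since $G$ is connected with at least two edges, at least one of $u$, $v$ has degree $\geq 2$; say $\deg(u) \geq 2$, so there is a neighbor $w \neq v$ of $u$, and then $w$–$u$–$v$ is a subgraph isomorphic to $P_2$ containing $e$. Consequently, every path in $G$ is automatically an $F$-path for $F = P_2$. Now for any two distinct vertices $i, j$, connectedness of $G$ gives an ordinary path from $i$ to $j$, which by the previous sentence is a $P_2$-path. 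Thus $G$ is $P_2$-connected, and Theorem \ref{liantong} yields that $\mathcal{A}_{P_2}$ is weakly irreducible.

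The only genuinely delicate point is the claim that every edge lies in a copy of $P_2$; the subtlety is handling the degree bookkeeping at the two endpoints of the edge, but since $G$ is connected and has at least two edges, the edge cannot be an isolated component, so some endpoint must have another incident edge. This is the main (and essentially the only) obstacle, and it is minor. Everything else is a direct translation through Definition \ref{ztltddy} and Theorem \ref{liantong}.
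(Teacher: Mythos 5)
Your proposal is correct and follows essentially the same route as the paper: both directions are reduced via Theorem \ref{liantong} to showing that a connected graph is $P_2$-connected exactly when $|E(G)|\geq 2$. You simply spell out in more detail the step the paper asserts directly, namely that connectedness with at least two edges forces every edge to lie in a copy of $P_2$.
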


\begin{proof}
When $|E(G)| \geq 2$.
Because $G$ is connected, every edge in $G$ is in a path of length $2$.
It follows that $G$ is $P_2$-connected. By Theorem \ref{liantong}, $\mathcal{A}_{P_2}$ is weakly irreducible.

When $\mathcal{A}_{P_2}$ is weakly irreducible.
By Theorem \ref{liantong}, $G$ is $P_2$-connected. Then $|E(G)| \geq 2$.
\end{proof}

Next we calculate the $P_2$-subgraph eigenvector centrality in an example and compare it with eigenvector centrality and degree centrality of graphs.
We denote $P_2$-subgraph eigenvector centrality, degree centrality and eigenvector centrality by $P_2C$, $DC$ and $EC$, respectively.
For a vertex $i$ of a graph $G$, we denote the number of $P_2$ contain $i$ by $NP_2(i)$.
The graph in Figure \ref{fig1} is a connected graph with $10$ vertices. We use Algorithm \ref{A1} in Section \ref{sec4} to calculate $P_2C$ for the vertices of the graph in Figure \ref{fig1}, and compare $P_2C$ with $DC$, $EC$ and $NP_2(i)$, as shown in Table \ref{table1}.

\begin{figure}[H]
\centerline{\includegraphics[scale=0.2]{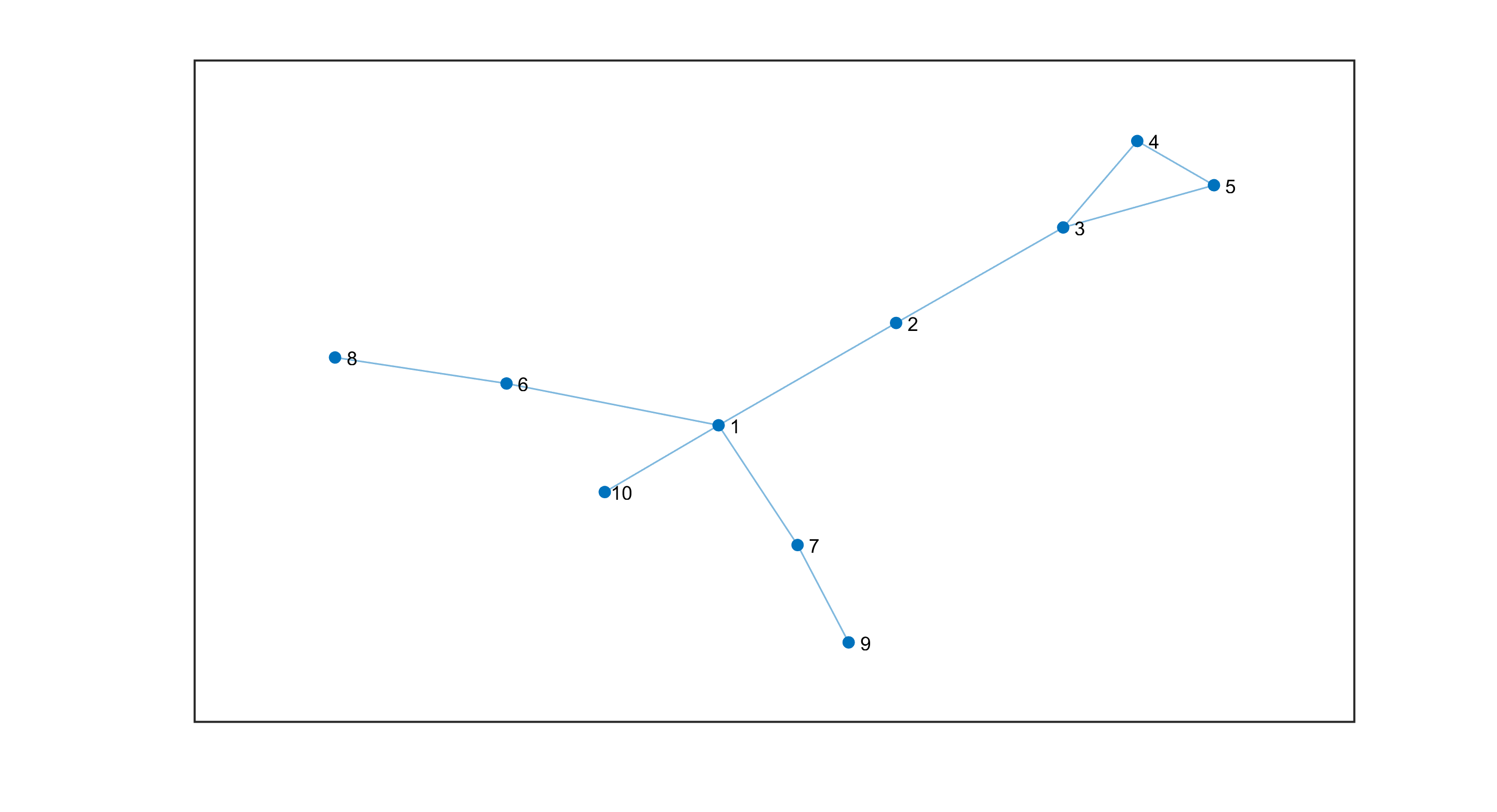}}
\caption{A graph with 10 vertices.}
\label{fig1}
\end{figure}

\begin{table}[htbp]
    \small
    \centering
    \caption{The centralities of the vertices in Fig1.}
    \label{table1}
    \begin{tabular}{l*{10}{c}}
    \toprule
        \textbf{Measures} & \textbf{1} & \textbf{2} & \textbf{3} & \textbf{4,5} & \textbf{6,7} & \textbf{8,9} & \textbf{10} \\ \midrule
        $DC$           & \textbf{4} & 2 & 3 & 2 & 2 & 1 & 1   \\
        $NP_2(i)$       & \textbf{9} & 6  & 6 & 4 & 4 & 1 & 3    \\
        $EC$  & 0.4567 & 0.3051 & \textbf{0.4674} & 0.3491 & 0.2389 & 0.1022 & 0.1953  \\
        $P_2C$   & \textbf{0.4180} &0.3929 &0.3774 & 0.3175 & 0.3093 & 0.1603 & 0.2901  \\
    \bottomrule
    \end{tabular}
\end{table}

From Table \ref{table1} we can know that the ranking of vertices under $P_2C$ is different from $DC$ and $EC$.
Vertex $1$  has the highest ranking in $P_2C$, and comparing with other vertices $NP_2(1)$ is the largest.
We can see that $NP_2(2)=NP_2(3)$,
but vertex $2$ has a higher score than vertex $3$ under $P_2C$.

\begin{remark}
The centrality score of vertex $i$ under $P_2C$ is not entirely determined by $NP_2(i)$, because $P_2C$ is a global centrality measure.
\end{remark}

\subsection{The ($K_2,F$)-subgraph eigenvector centrality of  graphs}
When $F$ is a general subgraph, the $F$-subgraph tensor may be reducible, in which case the $F$-subgraph eigenvector centrality does not exist.
We consider selecting both $K_2$ and $F$ in a graph $G$ to form a new subgraph tensor $\mathcal{A}_{K_2, F}$.
In the subsequent text, we prove that the ($K_2,F$)-subgraph tensor $\mathcal{A}_{K_2, F}$ of $G$ is weakly irreducible when $G$ is connected.
Thus when $G$ is connected, the ($K_2,F$)-subgraph eigenvector centrality always exists.

From Definition \ref{zitudingyi},
the $F$-subgraph eigenvector centrality score for a vertex $i \in V(G)$ is denoted by $x_i$,
\begin{align}    \label{88999}
\rho(\mathcal{A}_F)  x_{i}^{k-1} =
\sum_{\{ i,i_2,\cdots,i_k  \} \in V_F  }|\mathcal{F}(i,i_2, \cdots, i_k)|   x_{i_2} \cdots x_{i_k}.
\end{align}

When we choose $F$ to be $K_2$, the $K_2$-subgraph eigenvector centrality score for a vertex $i \in V(G)$  is denoted by $x_i$, we have
\begin{align} \label{8899}
\rho(\mathcal{A}_{K_2}) x_i = \sum_{i \sim j} x_j .
\end{align}


By multiplying both sides of Equation (\ref{8899}) by $x_i^{k-2}$, we obtain that
\begin{align} \label{889999}
\rho(\mathcal{A}_{K_2}) x_i^{k-1} = \sum_{ i \sim j}  x_j  x_i^{k-2}.
\end{align}

Let
\begin{align} \label{8800000}
\rho(\mathcal{A}_{K_2, F}) x_i^{k-1} =
\sum_{i \sim j}  x_j  x_i^{k-2}+
\sum_{\{ i,i_2,\cdots,i_k  \} \in V_F  }|\mathcal{F}(i,i_2, \cdots, i_k)|   x_{i_2} \cdots x_{i_k}  ,
\end{align}
where $\mathcal{A}_{K_2, F}=(a_{i i_2 \cdots i_k })$ is called the ($K_2,F$)-subgraph tensor of $G$, and
\begin{align} \label{7676}
a_{i i_2 \cdots i_k }=
\begin{cases}
1 ,                                &  if \text{ $ i_2 \in N(i), i_3, \cdots, i_k = i  $},\\
|\mathcal{F} (i,i_2,\cdots,i_k)|  ,&  if \text{ $ \{i,i_2,\cdots,i_k  \} \in V_F  $,}\\
0 ,                                &  otherwise,
\end{cases}
\end{align}
where $N(i)$ represents the neighbor set of $i$.
Then we have
\begin{align} \label{0101}
\mathcal{A}_{K_2, F} \boldsymbol {x}^{k-1} =\rho(\mathcal{A}_{K_2, F}) {\boldsymbol {x}}^{[k-1]} ,
\end{align}
where $\boldsymbol {x} = (x_1, \cdots, x_n  )^\mathrm{T}$.

\begin{thm} \label{F}
For a connected graph $G$,
let $F$ be a connected subgraph of $G$.
Then the subgraph tensor $\mathcal{A}_{K_2, F}$ is weakly irreducible.
\end{thm}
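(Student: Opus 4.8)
The plan is to apply Lemma~\ref{perron}: it suffices to show that the digraph $D_{\mathcal{A}_{K_2,F}}$ associated with $\mathcal{A}_{K_2,F}$ is strongly connected. The key point is that the first branch in the definition~\eqref{7676} of $\mathcal{A}_{K_2,F}$ records, for every edge of $G$, a nonzero entry whose index string contains the other endpoint, so $D_{\mathcal{A}_{K_2,F}}$ contains a directed copy of each edge of $G$ in both orientations.

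First I would make this precise. Let $\{i,j\}\in E(G)$, so $j\in N(i)$. By the first case of~\eqref{7676}, the entry $a_{i\,j\,i\,\cdots\,i}=1\neq 0$, where $i_2=j$ and $i_3=\cdots=i_k=i$. Since $j\in\{i_2,\dots,i_k\}$, the definition of the arc set $E(D_{\mathcal{A}_{K_2,F}})$ gives the arc $(i,j)\in E(D_{\mathcal{A}_{K_2,F}})$; exchanging the roles of $i$ and $j$ yields $(j,i)$ as well. Hence every edge $\{i,j\}$ of $G$ produces the two opposite arcs $(i,j)$ and $(j,i)$ in $D_{\mathcal{A}_{K_2,F}}$.

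Next I would use that $G$ is connected. For distinct $u,v\in V(G)$, take a path $u=w_0,w_1,\dots,w_m=v$ in $G$. Each $\{w_t,w_{t+1}\}$ is an edge, so $(w_t,w_{t+1})\in E(D_{\mathcal{A}_{K_2,F}})$, which gives a directed walk from $u$ to $v$; running along the reversed path gives a directed walk from $v$ to $u$. Thus $D_{\mathcal{A}_{K_2,F}}$ is strongly connected, and by Lemma~\ref{perron} the tensor $\mathcal{A}_{K_2,F}$ is weakly irreducible. Equivalently, $D_{\mathcal{A}_{K_2,F}}$ contains the symmetric digraph obtained from $G$ by replacing each edge with a pair of opposite arcs, which is already strongly connected because $G$ is connected, and the extra arcs contributed by the $F$-entries cannot destroy strong connectivity.

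There is no substantial obstacle here: the argument is essentially bookkeeping of which arcs the tensor entries produce. The only place needing a little care is that the $K_2$-part of $\mathcal{A}_{K_2,F}$ must be ``padded'' with the repeated index $i$ in order to sit inside a $k$-th order tensor, so one has to confirm that such a padded nonzero entry still contributes the arc $(i,j)$; this holds because $j$ still appears among $i_2,\dots,i_k$. It is also worth noting the implicit assumption that $G$ has at least one edge, without which $\mathcal{A}_{K_2,F}$ is a trivial one-dimensional tensor.
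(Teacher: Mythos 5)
Your proposal is correct and follows essentially the same route as the paper: both use the $K_2$-part entries $a_{i\,j\,i\cdots i}=1$ along a path of the connected graph $G$ to obtain directed arcs (hence a directed path) in $D_{\mathcal{A}_{K_2,F}}$, conclude strong connectivity, and invoke Lemma~\ref{perron}. Your explicit check that the padded index string still yields the arc $(i,j)$ is a nice touch of care, but the argument is the same as the paper's.
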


\begin{proof}
Let $D_{\mathcal{A}_{K_2, F}}$ be the associated directed graph of $\mathcal{A}_{K_2, F}$.
Because $G$ is connected, there exists a path $P_s = i_1 e_1 i_2 e_2 \cdots  i_s e_s i_{s+1}$ between any two distinct vertices $i, j \in V(G) $,
where $i=i_1$ and $j=i_{s+1}$.
Obviously, in $\mathcal{A}_{K_2, F}$ we have $a_{i_t i_{t+1} i_t  \cdots i_t } \neq 0$, for $t \in [s]$.
Then, in $D_{\mathcal{A}_{K_2, F}}$ there exists a directed arc from $i_t$ to $i_{t+1}$, which means that there is a directed path from $i$ to $j$.
So $D_{\mathcal{A}_{K_2, F}}$ is strongly connected.
By Lemma \ref{perron}, we can know that $\mathcal{A}_{K_2, F}$  is weakly irreducible.
\end{proof}

By Theorem \ref{F}, $\mathcal{A}_{K_2, F}$ is weakly irreducible when $G$ is connected.
So the equation $\mathcal{A}_{K_2, F} \boldsymbol {x} ^{k-1}=
 \rho(\mathcal{A}_{K_2, F}) \boldsymbol {x} ^{[k-1]}$ has a unique positive solution $\boldsymbol {x}$ (up to its scalar multiples).
\begin{defi}
For a connected graph $G$,
let $F$ be a connected subgraph of $G$ with $k$ vertices.
The positive solution $\boldsymbol {x}$ of $\mathcal{A}_{K_2, F} \boldsymbol {x} ^{k-1}=
 \rho(\mathcal{A}_{K_2, F}) \boldsymbol {x} ^{[k-1]}$ is called the ($K_2,F$)--subgraph eigenvector centrality of $G$.
\end{defi}


The complete graph $K_3$  have significant importance for studying social networks and understanding community structures\cite{rosvall2014memory,benson2015tensor,zhang2020community}.
Next we chose $F$ to be $K_3$ and study the ($K_2,K_3$)-subgraph eigenvector centrality.

Let $\mathcal{A}_{K_2, K_3} = (a_{ijk})$ be the ($K_2,K_3$)-subgraph tensor of  $G$ for $K_2$ and $K_3$, from (\ref{7676}) we have
\begin{align} \label{00}
a_{ijk}=
\begin{cases}
1 ,                         &  if \text{ $ j  \in N(i), k = i  $},\\
|\mathcal{K}_3 (i, j, k)|  ,&  if \text{ $ \{i, j, k  \} \in V_{K_3}  $,}\\
0 ,&   otherwise ,
\end{cases}
\end{align}
and $\mathcal{K}_3 (i, j, k)$ is a set of some subgraphs  isomorphic to $K_3$ with the vertices set  $\{i,j,k\}$.

We denote ($K_2,K_3$)-subgraph eigenvector centrality by $(K_2,K_3)C$.
Next, we calculate $(K_2,K_3)C$ of a graph in an example and analyze the characteristics of vertices that ranking high in $(K_2,K_3)C$.
The graph in Figure \ref{fig6} is a regular graph with $8$ vertices, each of degree $3$ \cite{estrada2005subgraph}.
In Table \ref{table2}, we list the scores of the vertices in terms of $DC$, $EC$ and $(K_2,K_3)C$.

From Table \ref{table2}, it can be observed that the scores of the vertices under $DC$ and $EC$ are the same.
Under $(K_2,K_3)C$, vertices $1$, $2$, and $8$ have the highest scores.
We can observe that vertices $1$, $2$, and $8$ are in a subgraph $K_3$ of the regular graph.
The scores of vertices $3$, $5$, and $7$ under $(K_2,K_3)C$ are higher than vertices $4$ and $6$,
we also observe that vertices $3$, $5$, and $7$ are respectively adjacent to vertices $2$, $1$, and $8$ of $K_3$.

\begin{figure}[H]
\centerline{\includegraphics[scale=0.23]{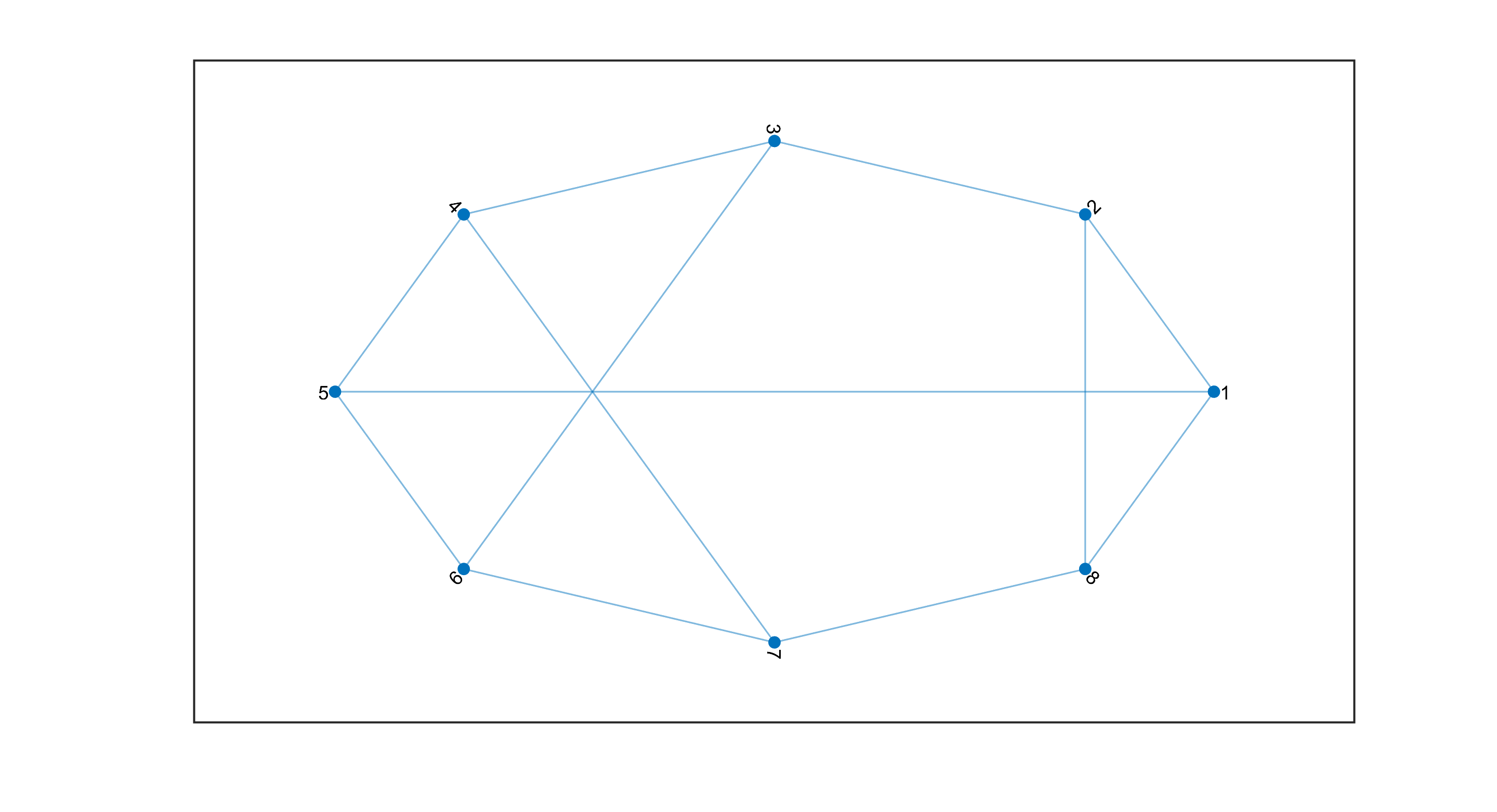}}
\caption{A regular graph.}
\label{fig6}
\end{figure}

\begin{table}[htbp]
    \small
    \centering
    \caption{The centralities of the nodes in Fig2.}
    \label{table2}
    \begin{tabular}{l*{10}{c}}
    \toprule
        \textbf{Measures} & \textbf{1,2,8} & \textbf{3,5,7} &  \textbf{4,6} &  \\ \midrule
        $DC$  &3  &3  &3 \\
        $EC$  &0.3536  &0.3536  &0.3536 \\
        $(K_2,K_3)C$  & \textbf{0.5381} & 0.1821 & 0.1259 \\
    \bottomrule
    \end{tabular}
\end{table}

\section{Numerical example} \label{sec4}
In \cite{zhou2013efficient}, researchers proposed an algorithm(ZQW) for computing the spectral radius and corresponding eigenvector of a nonnegative weakly irreducible tensor.
This paper uses the ZQW-algorithm to calculate the subgraph eigenvector centrality of graphs,
the algorithm is as follows.

\begin{algorithm}[h]
    \small
    \renewcommand{\algorithmicrequire}{\textbf{Input:}}
    \renewcommand{\algorithmicensure}{\textbf{Output:}}
    \caption{The subgraph eigenvector centrality of graphs}
    \label{A1}
    \begin{algorithmic}[1]
        \Require A graph $G$
        \Ensure The subgraph eigenvector centrality $\boldsymbol {x}$

        \State Form the subgraph tensor $\mathcal{A}_{F}$ by finding all subgraphs in $G$ that are isomorphic to $F$.
        \State Choose an $n$-dimension vector ${\boldsymbol {x}}^{(0)} > 0$.
 Let $\mathcal{B}_{F} = \mathcal{A}_{F} + \mathcal{I}$ and $\boldsymbol {y}^{(0)}
 =\mathcal{B}_{F}{(\boldsymbol {x}^{(0)})^2}$.
 Set $k=1$.
        \State Compute
        \Statex   $\boldsymbol {x}^{(k)}= \frac{{\boldsymbol {y}^{(k-1)}}^{[\frac{1}{2}]}   }
        { ||{\boldsymbol {y}^{(k-1)}}^{[\frac{1}{2}]}||} ,
        {\boldsymbol {y}}^{(k)}={\mathcal{B}}_{F}{({\boldsymbol {x}}^{(k)})^2}$
        \Statex  $\underline{\lambda}^k =\min_{x_i^{(k)}> 0 } \frac{y_i^{(k)}}{(x_i^{(k)})^2}  $ ,
                     $\overline{\lambda}_k = \max_{x_i^{(k)}> 0 } \frac{y_i^{(k)}}{(x_i^{(k)})^2}$.
        \State If $\underline{\lambda}^k = \overline{\lambda}_k$,
        the algorithm stops and then $\boldsymbol {x}^{(k)}$ is the subgraph tensor eigenvector centrality;
        Otherwise, replace $k$ by $k+1$ and go to step 3.
 \end{algorithmic}
\end{algorithm}

In this section, we calculate the $P_2$-subgraph eigenvector centrality and the ($K_2, K_3$)-subgraph eigenvector centrality in two real-world networks. Moreover we compare and analyze them with eigenvector centrality, betweenness centrality\cite{freeman1991centrality} and subgraph centrality\cite{estrada2005subgraph}.
We denote betweenness centrality by $BC$,  the expression for $BC$ of vertex $v$ is $C(v)= \sum_{s, v, t} \frac{\sigma_{st}(v)}{\sigma_{st}}$,
where $s, v, t$ are vertices that are distinct from each other,
$\sigma_{st}$ is the total number of shortest paths from vertex $s$ to vertex $t$, and $\sigma_{st}(v)$ is the number of shortest paths that pass through vertex $v$.
The subgraph centrality is denoted by $SC$, the expression for $SC$ of vertex $u$ is
$C(u)= \left(\exp(A)\right)_{u u}= \sum_{k=0}^{\infty} \frac{\left(A^k\right)_{u u}}{k !}$,
where $A$ is the adjacency matrix of graph $G$.

\subsection{The Sandi-Auths network}
The Sandi-Auths network is a social network consisting of $86$ vertices and $124$ edges. The vertices in the network represent political candidates in San Diego, California, and the edges represent the mutual support relationships between these candidates \cite{walteros2019detecting}.
In \cite{bugedo2024family}, a centrality measure based on subgraphs counting was proposed.
Specifically, the All-Trees centrality($TS$) has been studied when the subgraphs are trees,
and experiment was conducted in Sandi-Auths network.
We also use several centrality measures to rank the vertices in Sandi-Auths network, including $EC$, $BC$, $SC$, $P_2C$, $(K_2,K_3)C$ and $TS$.
In Table \ref{table4}, we list the top five vertices ranked by the aforementioned six centrality measures. In Figure \ref{fig11}, we present the heatmap of the Sandi-Auths network under these six centrality measures, and highlight the top five vertices in the graph.

\begin{table*}[h]
    \small
    \centering
    \caption{The top five vertices in Figure 3 under six centrality measures.}
    \label{table4}
    \begin{tabular}{lccccccccc}  
    \toprule
        \textbf{Measures} & \textbf{rank1} & \textbf{rank2} & \textbf{rank3} & \textbf{rank4} & \textbf{rank5}  \\ \midrule
        $EC$        & 33 & 36 & 2 &44 & 26  \\
        $P_{2}C$   & 36 & 33 & 30 &44 & 3  \\ 
        $(K_2,K_3)C$   & 2 & 66 & 18 & 32 & 76   \\
        $BC$        & 33 & 30 & 36 & 58 & 3  \\
        $SC$        & 30 & 36 & 33 & 3 & 2 \\
        $TS$        &36 &30 &40 &33 &2 \\
    \bottomrule
    \end{tabular}
\end{table*}

\begin{figure}
\centering
\subfloat[ $EC$]{
\includegraphics[width=0.35\textwidth]{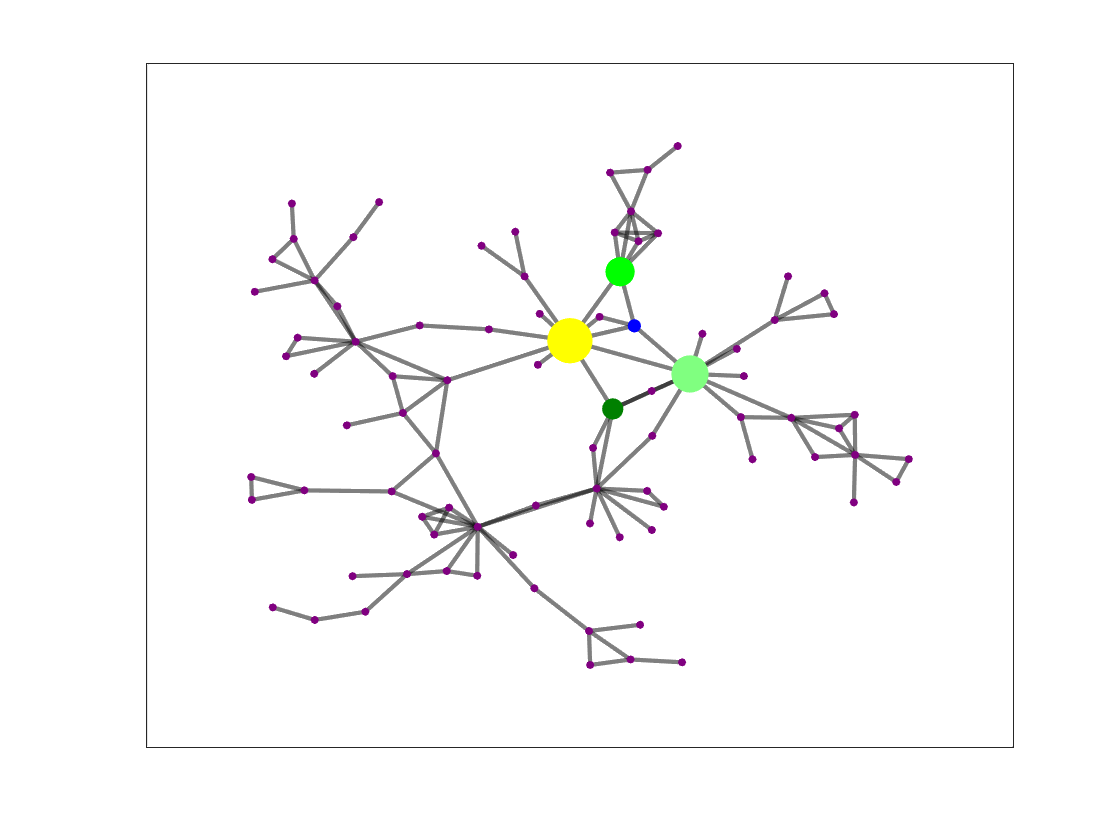}
}
\subfloat[ $P_2C$]{
\includegraphics[width=0.35\textwidth]{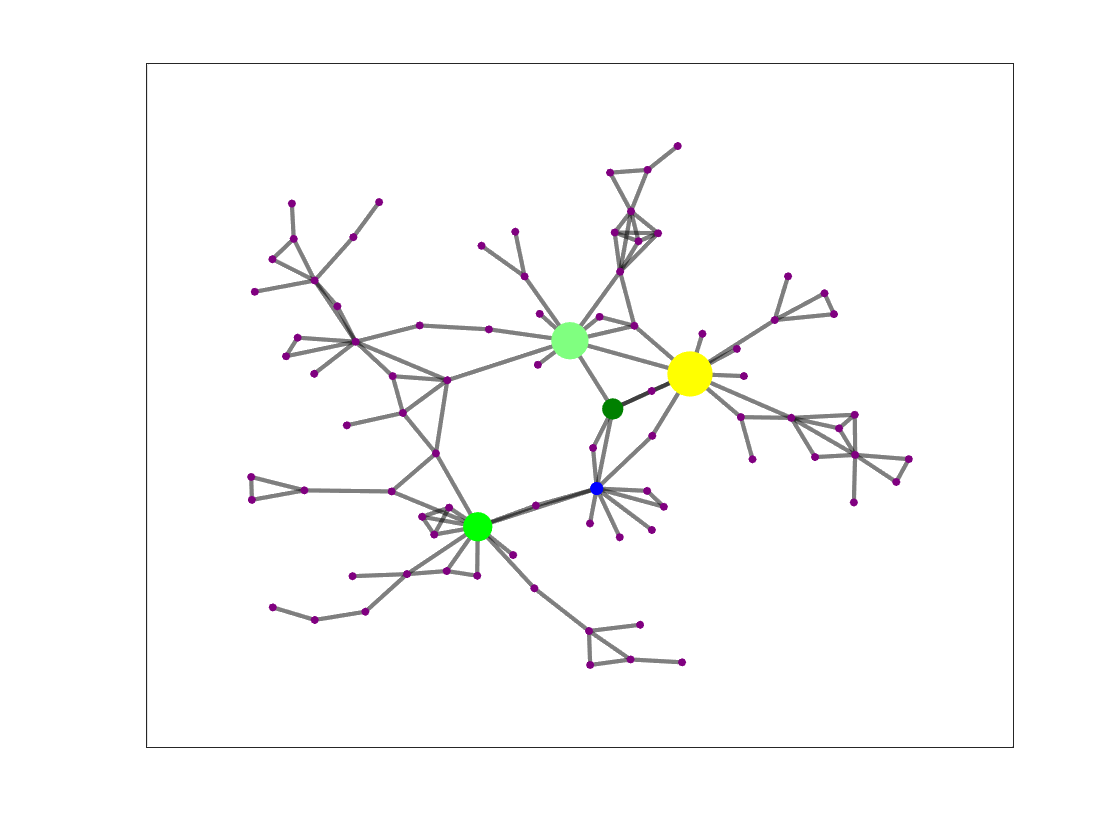}
}
\subfloat[ $(K_2,K_3)C$]{
\includegraphics[width=0.35\textwidth]{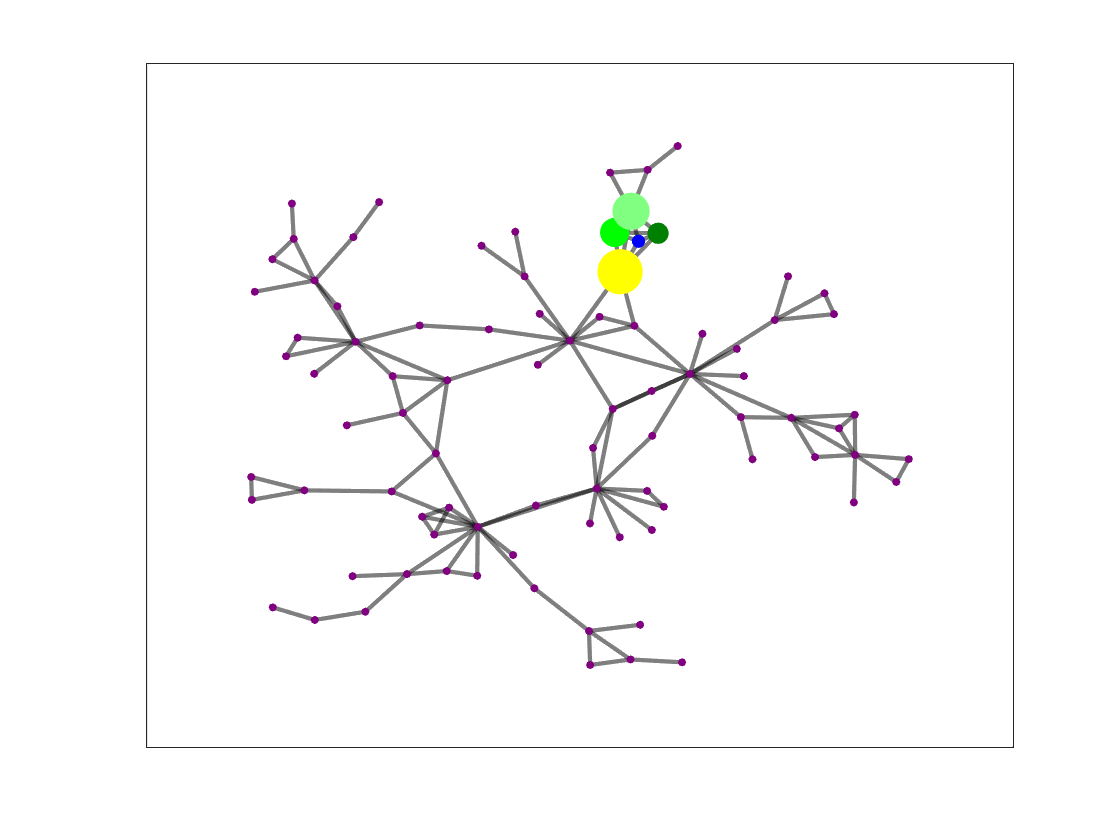}
}
\\ 
\subfloat[$BC$]{
\includegraphics[width=0.35\textwidth]{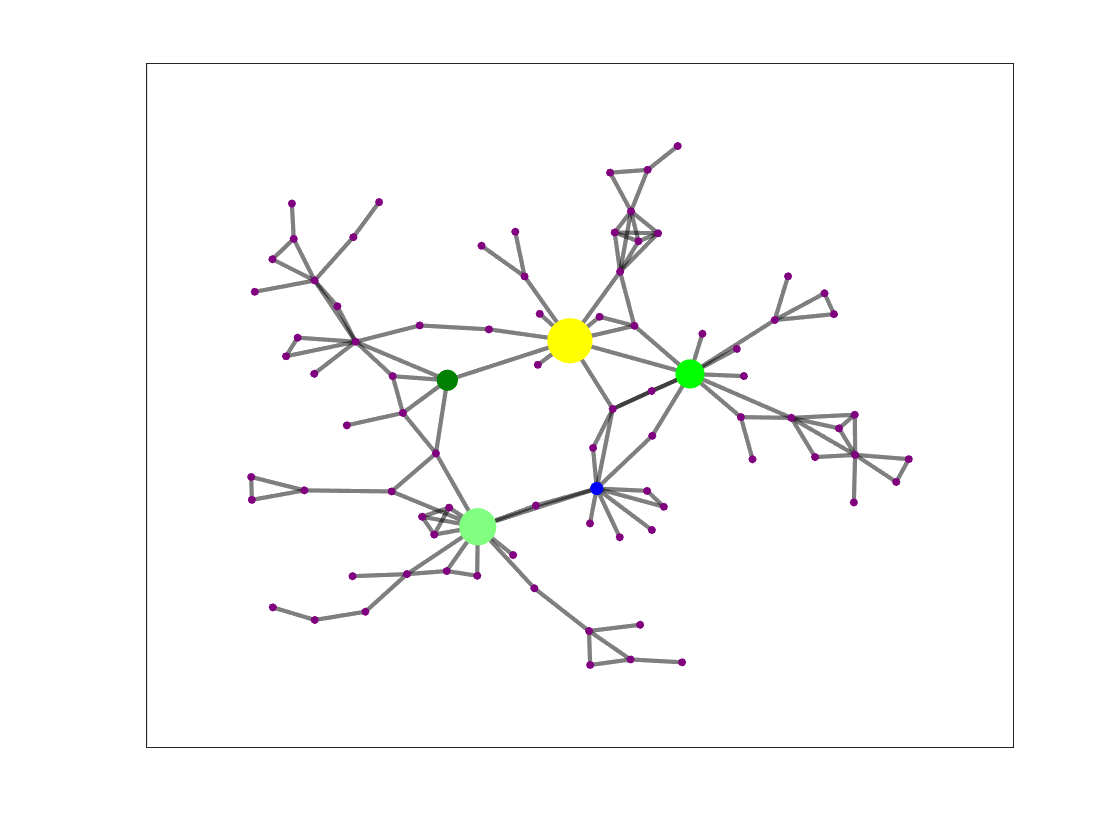}
}
\subfloat[$SC$]{
\includegraphics[width=0.35\textwidth]{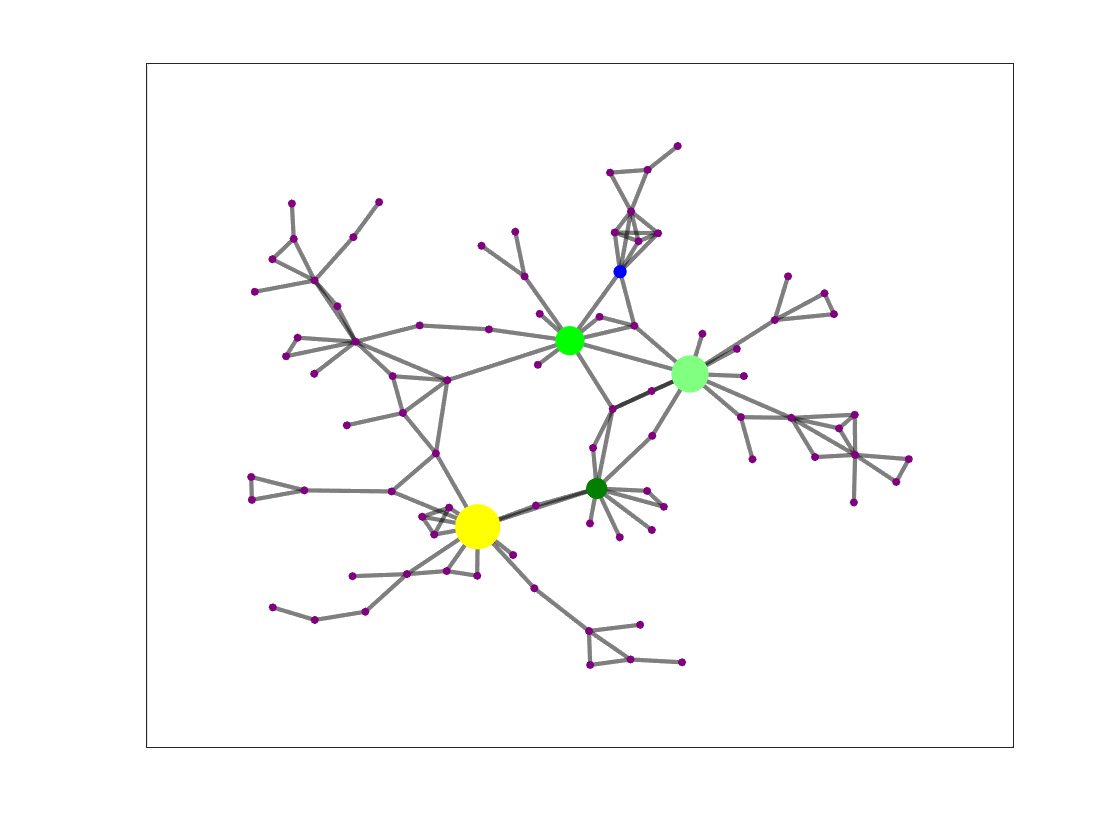}
}
\subfloat[$TS$]{
\includegraphics[width=0.35\textwidth]{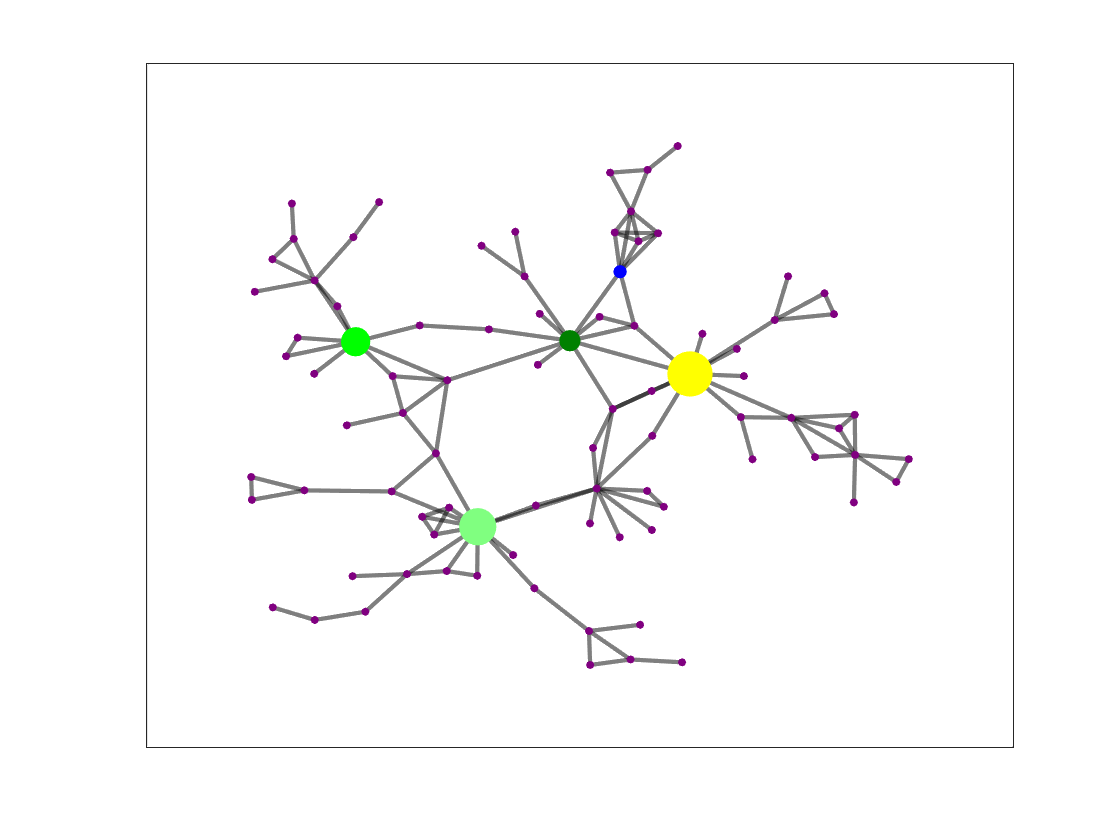}
}
\caption{The heatmap of the Sandi-Auths network under six centrality measures.}
\label{fig11}
\end{figure}

From Table \ref{table4}, we can observe that $P_2C$ and $(K_2,K_3)C$ have different rankings from other centrality measures.
Specifically, the ranking under $(K_2,K_3)C$ shows a significant difference when compare to other centrality measures.
In the top five rankings of $P_2C$ and $TS$, there are three common vertices. In the top five rankings of $(K_2,K_3)C$  and
$TS$, there is only one common vertex.
By observing Figure \ref{fig11}, it can be seen that vertices ranking higher in $P_2C$(or $(K_2,K_3)C$) are usually in more subgraph
$P_2$(or $K_3$).
However, the ranking of a vertex does not entirely depend on the number of given subgraphs contain the vertex. The centrality measures proposed in this paper are global centrality measures.

%

\subsection{The Zachary's karate club}
The Zachary's karate club network, as shown in Figure \ref{fig7}, consists of $34$ vertices and $78$ edges, represent the friendships among club members observed over two years \cite{zachary1977information}.
Due to a disagreement between the club administrator and the instructor, the club split into two smaller groups. We calculate the $P_2C$ and $(K_2,K_3)C$  for the  karate club and compare them with $EC$, $BC$ and $SC$.

\begin{figure}[H]
\centerline{\includegraphics[scale=0.24]{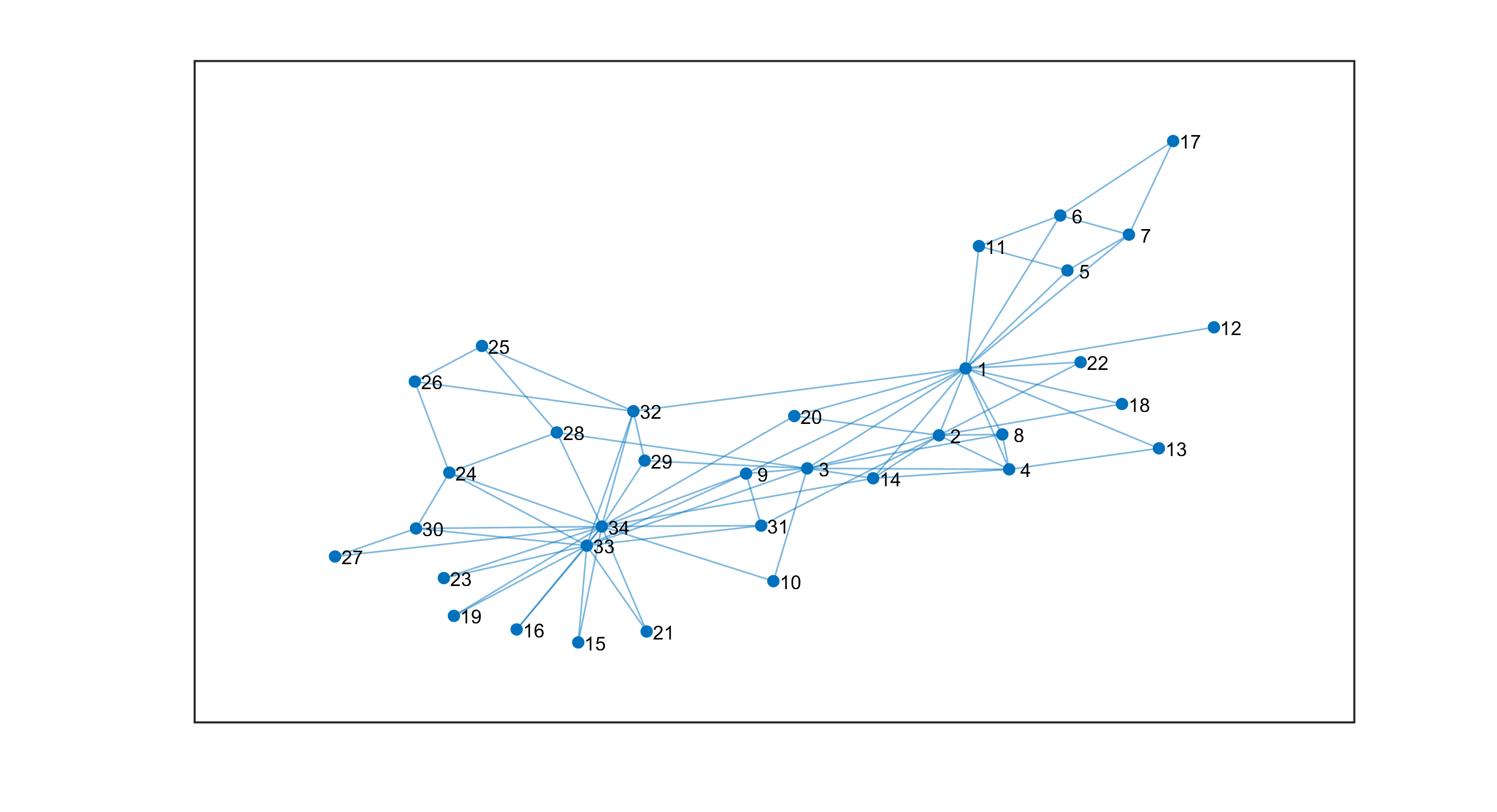}}
\caption{The Zachary's karate club graph.}
\label{fig7}
\end{figure}

In Figure \ref{fig9}, we plot the scatter plot matrix of $P_2C$, $(K_2,K_3)C$, $EC$, $BC$ and $SC$.
The diagonal elements of the scatter plot matrix are histograms of the score distributions for centrality measures, while the off-diagonal elements represent the correlations between two centrality measures.
Compared to other centrality measures, the distribution of centrality scores for vertices under $P_2C$ is more dispersed, and the differences are relatively small.
The numerical differences in centrality scores of vertices under $(K_2,K_3)C$ are relatively large.
Referring to Table \ref{table3}, we observe that $P_2C$ has a strong correlation with other centrality measures, especially with $EC$, while the correlations of $(K_2,K_3)C$ with other centrality measures are all relatively weak.

\begin{figure}[htbp]
\centering
\begin{minipage}[t]{0.45\textwidth}
\centering
\includegraphics[width=\textwidth]{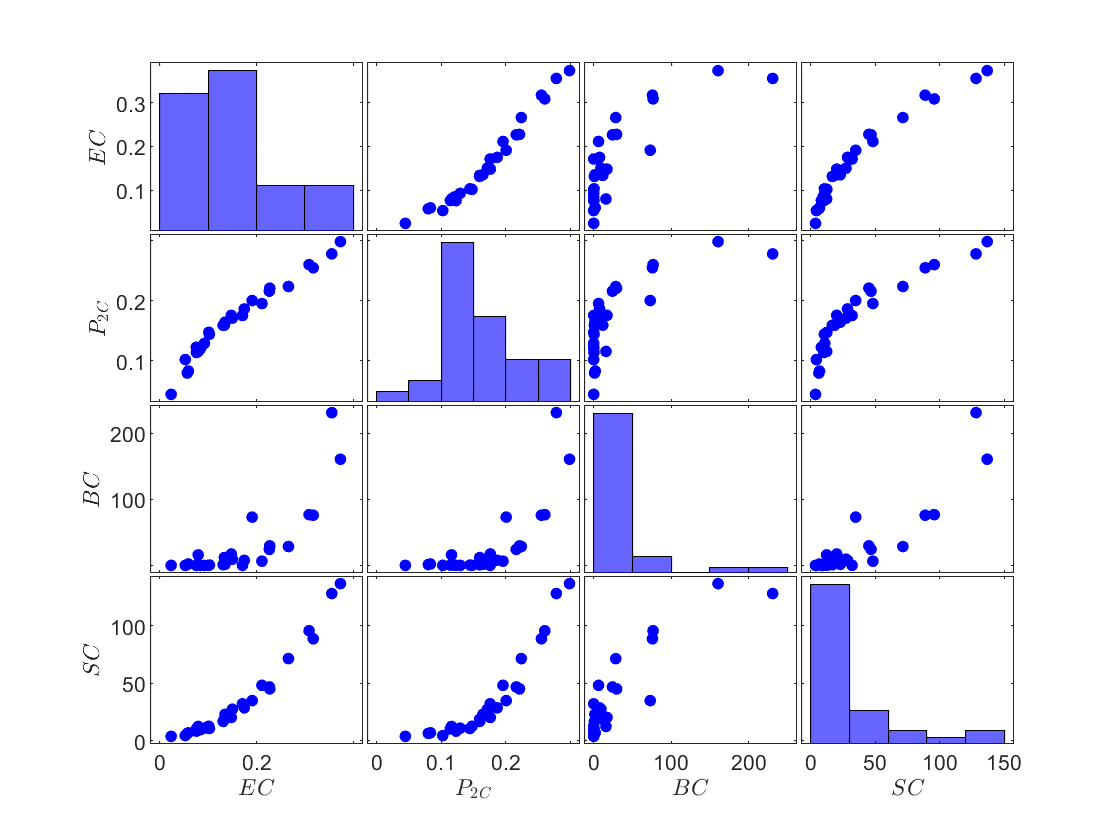}
\end{minipage}
\hspace{0.05\textwidth} 
\begin{minipage}[t]{0.45\textwidth}
\centering
\includegraphics[width=\textwidth]{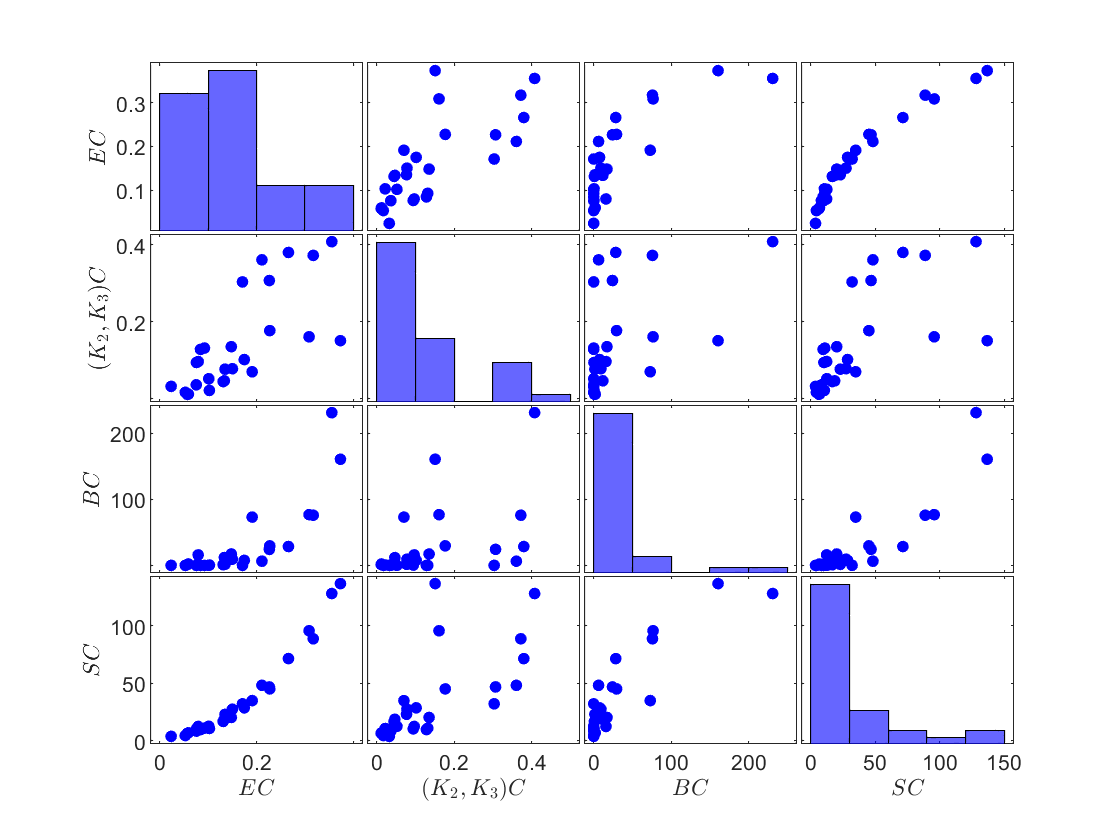}
\end{minipage}
\caption{Scatter plot matrix of the five centrality measures of the Zachary's karate club network.}
\label{fig9}
\end{figure}

\begin{table*}[h]
    \small
    \centering
    \caption{The top ten vertices in Figure 3 under five centrality measures.}
    \label{table3}
    \begin{tabular}{lccccccccc}  
    \toprule
        \textbf{Measures} & \textbf{rank1} & \textbf{rank2} & \textbf{rank3} & \textbf{rank4} & \textbf{rank5}  \\ \midrule
        $EC$        & 34 & 1 & 3 & 33 & 2  \\
        $P_{2}C$   & 34 & 1 & 33 & 3 & 2  \\ 
        $(K_2,K_3)C$   & 1 & 2 & 3 & 4 & 14   \\
        $BC$        & 1 & 34 & 33 & 3 & 32  \\
        $SC$        & 34 & 1 & 33 & 3 & 2 \\
    \toprule
        \textbf{Measures} & \textbf{rank6} & \textbf{rank7} & \textbf{rank8} & \textbf{rank9} & \textbf{rank10} \\ \midrule
        $EC$        & 9 & 14 & 4  & 32  &31  \\
        $P_{2}C$   & 9 & 14 & 32  & 4   &31  \\
        $(K_2,K_3)C$    & 8 &9 &20 &18 &22  \\
       $BC$        & 9 & 2& 14 & 20 & 6  \\
       $SC$       & 4 & 14 & 9 & 32 & 8 \\
    \bottomrule
    \end{tabular}
\end{table*}


\section{Conclusion}
The eigenvector centrality of graphs considers  the centrality of a vertex is influenced by its adjacent vertices, and uses a matrix to represent whether vertices are adjacent, i.e., whether they are on the same edge.
As research deepens, we consider that the centrality of a vertex is influenced by the big subgraphs that contain the vertex.
Naturally, the relationship between vertices and subgraphs is represented by a tensor.
In recent years, tensor has been widely used in various fields to address high-dimensional problems, and it may be an inevitable trend to use tensors to study centrality of graphs.
In this paper, for a given structural subgraph $F$ of graph $G$, we consider that the centrality of a vertex $i$ of $G$ is determined by the centrality of other vertices in all subgraphs contain $i$ and isomorphic to $F$.
We propose the $F$-subgraph eigenvector centrality and the ($K_2,F$)-subgraph eigenvector centrality of $G$.
When we choose $F$ to be $P_1$(or $K_2$), the $F$-subgraph eigenvector centrality is eigenvector centrality of graphs.
The necessary and sufficient conditions for the existence of both $P_2$-subgraph eigenvector centrality and the ($K_2,F$)-subgraph eigenvector centrality are also provided in this paper.
In regular graphs, vertices have the same eigenvector centrality scores.
The $(K_2,K_3)$-eigenvector centrality proposed in this paper can distinguish vertices in a given regular graph.
We apply the centrality measures proposed in this paper to some real-world networks. The experimental results indicate that vertices with higher centrality scores are generally  in a greater number of given subgraphs.
The centrality measures proposed in this paper are global centrality measures,
the ranking of a vertex does not entirely depend on the number of given subgraphs contain the vertex.
.

\section*{Acknowledgments}
This work is supported by the National Natural Science Foundation of China (No. 12071097, 12371344), the Natural Science Foundation for The Excellent Youth Scholars of the Heilongjiang Province (No. YQ2022A002) and the Fundamental Research Funds for the Central Universities.

\section*{References}
\bibliographystyle{plain}
\bibliography{ml0ht2}
\end{spacing}
\end{document}